\documentclass[12pt]{article}

\usepackage{amssymb}%
\usepackage{ytableau}
\usepackage{amsmath}%
\usepackage{marvosym}
\usepackage{cjhebrew}
\usepackage{amsthm}%
\usepackage{bbm}
\usepackage{xcolor}%

\allowdisplaybreaks%

\def\shin{{\text{\large\</s>}}}

{\theoremstyle{plain}%
 \newtheorem{theorem}{Theorem}

 \newtheorem{lemma}{Lemma}% 
}
{\theoremstyle{remark}

}
{\theoremstyle{definition}
\newtheorem{definition}{Definition}
\newtheorem{example}{Example}
}

\usepackage{tikz-cd}%%%%%%%%%%%%%%%
\usepackage{nccmath}%%%%%%%%%%%%%%%
\usepackage{genyoungtabtikz}%%%%%%%%%%%%%%%
\usepackage[enableskew]{youngtab}
\usepackage{mathrsfs}%%%%%%%%%%%%%%%

\begin{document}

\begin{center}
{\Large{Quasi-immanants}}

\vspace{0.05in} 

\hspace{0.0in} {\textsc{John M. Campbell}} 

 \ 

\end{center}

\begin{abstract}
 For an integer partition $ \lambda$ of $n$ and an $n \times n$ matrix $A$, consider the expansion of the immanant 
 $\text{Imm}^{\lambda}(A)$ as a sum indexed by permutations $\sigma$ of order $n$, with coefficients given by the irreducible 
 characters $\chi^{\lambda}(\text{ctype}(\sigma))$ of the symmetric group $S_{n}$, for the cycle type $\text{ctype}(\sigma) \vdash n$ of 
 $\sigma$. Skandera et al.\ have introduced combinatorial interpretations of a generalization of immanants given by replacing the 
 coefficient $\chi^{\lambda}(\text{ctype}(\sigma))$ with preimages with respect to the Frobenius morphism of elements among the 
 distinguished bases of the algebra $\textsf{Sym}$ of symmetric functions. Since $ \textsf{Sym}$ is contained in the algebra $ 
 \textsf{QSym}$ of quasisymmetric functions, this leads us to further generalize immanants with the use of quasisymmetric functions. 
 Since bases of $ \textsf{QSym}$ are indexed by integer compositions, we make use of cycle compositions in place of cycle types to 
 define the family of \emph{quasi-immanants} introduced in this paper. This is achieved through the use of the quasisymmetric power 
 sum bases due to Ballantine et al., and we prove a combinatorial formula for the coefficients arising in an analogue, given by a special 
 case of quasi-immanants associated with quasisymmetric Schur functions, of second immanants. 
\end{abstract}

\noindent {\footnotesize \emph{MSC:} 15A15, 05E05}

\vspace{0.1in}

\noindent {\footnotesize \emph{Keywords:} immanant, symmetric function, Frobenius map, quasisymmetric function, irreducible character, symmetric group, cycle type, quasisymmetric Schur function}

\section{Introduction}
 Matrix functions such as determinants and permanents form a fundamentally important part of advanced linear algebra. Matrix 
 functions known as \emph{immanants} were introduced in 1934 \cite{LittlewoodRichardson1934} and generalize both determinants 
 and permanents in a way that may be seen as forming a bridge between linear algebra and algebraic combinatorics. This generalization 
 due to Littlewood and Richardson is defined with the use of the irreducible characters of symmetric groups, in such a way so that every 
 integer partition of $n$ gives rise to an immanant for a given $n \times n$ matrix. Immanants have important applications in many 
 different areas such as algebraic graph theory, and this motivates research endeavors based on the introduction of and application of 
 generalizations of immanants. Previously, we introduced a generalization of immanants using the characters of diagram algebras 
 containing symmetric group algebras \cite{Campbelltoappear}. In contrast, we consider, in our current paper, generalizing immanants 
 using an extension of symmetric \emph{functions}, as opposed to symmetric \emph{groups}. 

 An integer partition is a finite tuple of weakly decreasing positive integers. For an integer partition $\lambda$, if the sum of the entries 
 of $\lambda$ equals $n$, we write $\lambda \vdash n$, and we let the order $|\lambda|$ of $\lambda$ be equal to $n$. We let the 
 length $\ell(\lambda)$ of $ \lambda$ refer to the number of its entries, writing $\lambda = (\lambda_{1}, \lambda_{2}, \ldots, 
 \lambda_{\ell(\lambda)})$. For a permutation $ \sigma$ in the symmetric group $S_{n}$, we let $\text{ctype}(\sigma)$ denote the 
 partition of $n$ obtained by sorting the cycle lengths of $\sigma$. For $ \lambda \vdash n$, we also let $\chi^{\lambda}(\mu)$ denote 
 the value of the irreducible character $\chi^{\lambda}$ of $S_{n}$ evaluated at any permutation $\sigma \in S_{n}$ such that 
 $\text{ctype}(\sigma) = \mu$. For an $n \times n$ matrix $A = [a_{i, j}]_{1 \leq i, j \leq n}$ and for $\lambda \vdash n$, the 
 \emph{immanant} $\text{Imm}^{\lambda}$ may then be defined so that 
\begin{equation}\label{Immdefinition}
 \text{Imm}^{\lambda}(A) = \sum_{\sigma \in S_{n}} \chi^{\lambda}(\text{ctype}(\sigma)) \prod_{i=1}^{n} a_{i, \sigma_{i}}, 
\end{equation}
 again with reference to Littlewood and Richardson's paper \cite{LittlewoodRichardson1934}. Skandera et al.\ 
 \cite{ClearmanSheltonSkandera2011,Skandera2021} have explored combinatorial applications of extensions of \eqref{Immdefinition} 
 based on the transition matrices between the usual bases of the algebra of symmetric functions. We further extend such immanant 
 functions, with the use of quasisymmetric functions, as opposed to symmetric functions. 

 We let power sum generators be given by formal sums of the form $ p_{n} = \sum_{i \geq 1} x_i^{n} $ for indeterminates $x_{i}$ for 
 $i \geq 1$, referring to MacDonald's text on symmetric functions for details \cite{Macdonald1995}. We then write 
\begin{equation}\label{pproductrule}
 p_{\mu} = p_{\mu_{1}} p_{\mu_{2}} \cdots p_{\mu_{\ell(\mu)}} 
\end{equation}
 for an integer partition $\mu$. We then let expressions of the form $s_{\lambda}$ for $\lambda \vdash n$ be defined by taking formal 
 linear combinations so that 
\begin{equation}\label{Schurtopower}
 s_{\lambda} = \frac{1}{n!} \sum_{\sigma \in S_{n}} \chi^{\lambda}( \text{ctype}(\sigma) ) p_{\text{ctype}(\sigma)}. 
\end{equation}
 Expressions of the form $s_{\lambda}$ are referred to as \emph{Schur functions} and are of vital importance within algebraic combinatorics. 
 The work of Skandera et al.\ \cite{ClearmanSheltonSkandera2011,Skandera2021} concerns generalizations of \eqref{Immdefinition} 
 using power sum expansions for the usual bases of symmetric functions, and, in contrast, we generalize \eqref{Immdefinition} with 
 the use of the \emph{quasisymmetric} power sums introduced by Ballantine et al.\ \cite{BallantineDaughertyHicksMasonNiese2020}.

 For a positive integer $n$, adapting notation from the work of Skandera \cite{Skandera2021}, we define the Frobenius map $\text{Frob} 
 = \text{Frob}_{n}$ (cf.\ \cite[\S4.7]{Sagan2001}) so that it maps an element $\theta$ in a space of symmetric group traces to 
 the symmetric function 
\begin{equation}\label{thetamapsto}
 \theta \mapsto \frac{1}{n!} \sum_{\sigma \in S_{n}} \theta(\sigma) p_{\text{ctype}(\sigma)}, 
\end{equation}
 observing how this can be thought of as generalizing \eqref{Schurtopower}, in the sense that $\chi^{\lambda} \mapsto s_{\lambda}$. 
 The Frobenius map in \eqref{thetamapsto} provides both an isometry and an algebra isomorphism between the space of class 
 functions on symmetric groups and symmetric functions, referring to Sagan's text for details \cite[\S4.7]{Sagan2001}. Since immanants 
 can be defined in a natural way according to \eqref{Schurtopower}, this leads to combinatorial topics and problems on the 
 generalization of immanants using the Froebnius mapping in \eqref{thetamapsto}, and this was considered in an equivalent way in the 
 work of Skandera et al.\ \cite{ClearmanSheltonSkandera2011,Skandera2021}. Since the Frobenius map is of fundamental importance in 
 terms of how it forms connections between symmetric functions and the representation theory of the symmetric group, the foregoing 
 points lead us to generalize immanants using a generalization of symmetric functions. 

 Informally, since the algebra $\textsf{QSym}$ of quasisymmetric functions is such that its bases are indexed by integer compositions, 
 whereas the bases of $\textsf{Sym}$ are indexed by integer partitions, it can be thought of as being advantageous to employ a 
 \emph{cycle composition} analogue of partitions of the form $\text{ctype}(\sigma) \vdash n$ involved in the above definition of 
 $ \text{Imm}^{\lambda}$, in the following sense. Since an integer composition is not necessarily equal to the tuple obtained by 
 permuting its entries, cycle compositions, as defined in Section \ref{sectiondefinition}, can be thought of as ``containing more 
 information'' about permutations relative to partitions of the form $\text{ctype}(\sigma)$. 

\section{Background}\label{sectionPrelim}
 For power sum generators as defined above in the set $\mathbb{Q}[\![ x_{1}, x_{2}, \ldots ]\!]$ of formal power series over $\mathbb{Q}$ 
 with the given indeterminates, we proceed to define the algebra $\textsf{Sym}$ of symmetric functions by writing $\textsf{Sym} = 
 \mathbb{Q}[p_{1}, p_{2}, \ldots]$, i.e., so that $\textsf{Sym}$ is the commutative subalgebra of $\mathbb{Q}[\![ x_{1}, x_{2}, \ldots ]\!]$ 
 generated by $\{ p_{1}, p_{2}, \ldots \}$. By setting the degree of $p_{n}$ as $n$ for positive integers $n$, we thus have that $ 
 \textsf{Sym}$ may equivalently be defined as the free commutative $\mathbb{Q}$-algebra with one generator in each degree. In 
 view of the product rule in \eqref{pproductrule}, we find that bases of $\textsf{Sym}$ are indexed by the set $\mathcal{P}$ of all 
 integer partitions, writing $p_{()} = 1$, giving us the \emph{power sum basis} $\{ p_{\lambda} \}_{\lambda \in \mathcal{P}}$ of $ 
 \textsf{Sym}$. The transition matrices associated with the expansion rule in \eqref{Schurtopower} then allow us to define the 
 \emph{Schur basis} $\{ s_{\lambda} \}_{\lambda \in \mathcal{P}}$ of $\textsf{Sym}$. 

 An integer composition refers to a finite tuple $\alpha$ of positive integers, and we let $\ell(\alpha)$ denote the number of entries of $ 
 \alpha$ and write $\alpha = (\alpha_{1}, \alpha_{2}, \ldots, \alpha_{\ell(\alpha)})$. If the sum of the entries of $\alpha$ is equal to a 
 given integer $n$, we let this property be denoted by writing $\alpha \vDash n$, and we let the order $|\alpha|$ of $\alpha$ be equal 
 to $n$. Also, we let $\mathcal{C}$ denote the set of all integer compositions, including the empty composition $()$, writing $() 
 \vDash 0$. Integer compositions are of basic importance in the combinatorial and algebraic study of quasisymmetric functions, 
 which we define as below. 

 For a nonempty element $\alpha$ in $\mathcal{C}$, we write $M_{\alpha}$ to denote the element in $\mathbb{Q}[\![ x_{1}, x_{2}, \ldots ] 
 \!]$ such that $$ M_{\alpha} = \sum_{i_{1} < i_{2} < \cdots < i_{\ell(\alpha)}} x_{i_{1}}^{\alpha_{1}} x_{i_{2}}^{\alpha_{2}} \cdots 
 x_{i_{\ell(\alpha)}}^{\alpha_{\ell(\alpha)}}, $$ and we write $M_{()} = M_{0} = 1$. This leads us to define $\textsf{QSym}$ as the 
 subalgebra of $\mathbb{Q}[\![ x_{1}, x_{2}, \ldots ]\!]$ spanned by $\{ M_{\alpha} \}_{\alpha \in \mathcal{C}}$, which provides a 
 basis of $\textsf{QSym}$ that we refer to as the \emph{monomial basis}. For an integer composition $\alpha$, we let $\text{sort}(\alpha) 
 $ denote the integer partition obtained by sorting the entries of $\alpha$, i.e., in weakly decreasing order. For a nonempty integer 
 partition $\lambda$, we set 
\begin{equation}\label{membed}
 m_{\lambda} = \sum_{\substack{ \alpha \in \mathcal{C} \\ \text{sort}(\alpha) = \lambda }} M_{\alpha} 
\end{equation}
 and we write $m_{()} = m_{0} = 1$. We have that $m_{\lambda} \in \textsf{Sym}$ for all $\lambda \in \mathcal{P}$, and that $\{ 
 m_{\lambda} \}_{\lambda \in \mathcal{P}}$ is a basis of $\textsf{Sym}$, i.e., the \emph{monomial basis} of $\textsf{Sym}$. 

 Since a main purpose of this paper is to generalize the definition in \eqref{Immdefinition} using elements of $\textsf{QSym}$, this 
 leads us to consider how Skandera et al.\ \cite{ClearmanSheltonSkandera2011,Skandera2021} have previously applied generalizations of 
 \eqref{Immdefinition} using the standard bases of $\textsf{Sym}$, as in with the elementary immanants defined below. 

 The elementary symmetric generator $e_{n} \in \textsf{Sym}$ for $n \geq 0$ is such that $e_{0} = 0$ and such that $$ e_{n} = 
 \sum_{1 \leq i_{1} < i_{2} < \cdots < i_{n}} x_{i_{1}} x_{i_{2}} \cdots x_{i_{n}} $$ for $n > 0$. By then writing $e_{\lambda} = 
 e_{\lambda_{1}} e_{\lambda_{2}} \cdots e_{\lambda_{\ell(\lambda)}}$, the \emph{elementary basis} of $\textsf{Sym}$ is $\{ e_{\lambda} 
 \}_{\lambda \in \mathcal{P}}$. Following \cite{ClearmanSheltonSkandera2011,Skandera2021}, we may define coefficients of the form 
 $\epsilon^{\lambda}(\sigma)$, for $\sigma \in S_{n}$ and for $\lambda \vdash n$, according to the expansion such that 
\begin{equation}\label{etop}
 e_{\lambda} = \frac{1}{n!} \sum_{\sigma \in S_{n}} \epsilon^{\lambda}(\sigma) p_{\text{ctype}(\sigma)}. 
\end{equation}
 The \emph{elementary immanant} $\text{Imm}^{\epsilon^{\lambda}}(A)$
 may then, according to \cite{ClearmanSheltonSkandera2011}, be defined so that 
\begin{equation}\label{Immepsilon}
 \text{Imm}^{\epsilon^{\lambda}}(A) 
 = \sum_{\sigma \in S_{n}} \epsilon^{\lambda}(\sigma) 
 \prod_{i=1}^{n} a_{i, \sigma_{i}}. 
\end{equation}

 Again with reference to \cite{ClearmanSheltonSkandera2011,Skandera2021}, we let, again for $\lambda \vdash n$ and for $\sigma \in 
 S_{n}$, scalars of the form $\phi^{\lambda}(\sigma)$ be such that 
\begin{equation}\label{mtop}
 m_{\lambda} = \frac{1}{n!} \sum_{\sigma \in S_{n}} \phi^{\lambda}(\sigma) p_{\text{ctype}(\sigma)}, 
\end{equation}
 i.e., so that $\phi^{\lambda}$ is the preimage of $m_{\lambda}$ with respect to $\text{Frob}$. 
 Following \cite{ClearmanSheltonSkandera2011}, 
 we then define the \emph{monomial immanant} $\text{Imm}^{\phi^{\lambda}}(A)$ so that 
\begin{equation}\label{Immphi}
 \text{Imm}^{\phi^{\lambda}}(A) 
 = \sum_{\sigma \in S_{n}} \phi^{\lambda}(\sigma) 
 \prod_{i=1}^{n} a_{i, \sigma_{i}}. 
\end{equation}

 We write $$ h_{n} = \sum_{1 \leq i_{1} \leq i_{2} \leq \cdots \leq i_{n}} x_{i_{1}} x_{i_{2}} \cdots x_{i_{n}} $$ to denote the $n^{\text{th}}$ 
 complete homogeneous generator of $\textsf{Sym}$ for integers $n > 0$, writing $h_{()} = h_{0} = 1$ and writing $h_{\lambda} = 
 h_{\lambda_{1}} h_{\lambda_{2}} \cdots h_{\lambda_{\ell(\lambda)}}$. The scalars in the expansions among \eqref{Schurtopower}, 
 \eqref{etop}, and \eqref{mtop} are determined by the usual (Hall) inner product structure on $\textsf{Sym}$ such that 
\begin{equation}\label{defineHall}
 \langle h_{\lambda}, m_{\mu} \rangle = \delta_{\lambda, \mu} = \begin{cases} 
 1 & \text{if $\lambda = \mu$,} \\ 
 0 & \text{if $\lambda \neq \mu$.} 
 \end{cases} 
\end{equation}
 In the work of Skandera \cite{Skandera2021}, the classical immanant function in \eqref{Immdefinition} was implicitly generalized to matrix functions given 
 by replacing the coefficient $ \chi^{\lambda}(\text{ctype}(\sigma))$ in \eqref{Immdefinition} with the Hall inner product with $p_{\text{ctype}(\sigma)}$ 
 as an argument and a fixed element of $\textsf{Sym}$ as an argument. Since $\textsf{QSym}$ is not self-dual, as reviewed below, it is not clear how an 
 analogue of the bilinear function in \eqref{defineHall} could be applied to generalize immanants with the use of 
 quasisymmetric functions. 

 As a noncommutative analogue of the generating set such that
 $ \textsf{Sym} = \mathbb{Q}[h_{1}, h_{2}, \ldots]$, we set 
\begin{equation}\label{defineNSym}
 \textsf{NSym} := \mathbb{Q}\langle \text{{\bf h}}_{1}, \text{{\bf h}}_{2}, \ldots \rangle 
\end{equation}
 for indeterminates $\text{{\bf h}}_{n}$ that we endow with a degrees so that $\text{deg}(\text{{\bf h}}_{n}) = n$, i.e., so that the structure 
 in \eqref{defineNSym} may be understood as the free $\mathbb{Q}$-algebra with one generator in each degree. This algebra was 
 introduced by Gelfand et al.\ \cite{GelfandKrobLascouxLeclercRetakhThibon1995} and is referred to as the \emph{algebra of 
 noncommutative symmetric functions}. For a nonempty integer composition $\alpha$, we write $\text{{\bf h}}_{\alpha} = 
 \text{{\bf h}}_{\alpha_{1}} \text{{\bf h}}_{\alpha_{2}} \cdots \text{{\bf h}}_{\alpha_{\ell(\alpha)}}$, and we set $\text{{\bf h}}_{()} = \text{{\bf 
 h}}_{0} = 1$, so that the family $\{ \text{{\bf h}}_{\alpha} \}_{\alpha \in \mathcal{C}}$ is a basis of $\textsf{NSym}$, i.e., the 
 \emph{complete homogeneous basis} of $\textsf{NSym}$. 

 For a vector space $V$ over a field $\mathbbm{k}$, the \emph{dual} $V^{\ast}$ of $V$ consists of linear morphisms from $V$ to 
 $\mathbbm{k}$, giving rise to a bilinear pairing $\langle \cdot, \cdot \rangle\colon V \otimes V^{\ast} \to \mathbbm{k}$. The algebras 
 $\textsf{NSym}$ and $\textsf{QSym}$ are dual, with respect to the graded dual Hopf algebra structures on the specified algebras, 
 according to the pairing 
\begin{equation}\label{pairingNSym}
 \langle\cdot, \cdot \rangle\colon \textsf{NSym} \otimes \textsf{QSym} \to \mathbb{Q}
\end{equation}
 such that 
\begin{equation}\label{pairhM}
 \langle \text{{\bf h}}_{\alpha}, M_{\beta} \rangle = \delta_{\alpha, \beta}, 
\end{equation} 
 by direct analogy with \eqref{defineHall}. 
 The bilinear mapping in \eqref{pairingNSym} 
 is of key importance in relation to the {quasisymmetric power sums} \cite{BallantineDaughertyHicksMasonNiese2020} 
 required for our construction. 

\section{Quasi-immanants}\label{sectiondefinition}
 Determinants naturally arise in terms of how the primary bases of the algebra of symmetric functions relate to one another, as in the 
 classical Jacobi--Trudi rules, and this gives rise to problems and research investigations on immanants of Jacobi--Trudi matrices 
 \cite{GouldenJackson1992Algebra,StanleyStembridge1993,Stembridge1992}. Further and notable research contributions that concern 
 both immanants and symmetric functions \cite{GouldenJackson1992Proc,Haiman1993,Lesnevich2024,NagarSivasubramanian2021,RhoadesSkandera2006}
 motivate our work and lead to questions as to the use of quasisymmetric functions in place of 
 symmetric functions and in relation to immanants. 
 To define the quasi-immanant function introduced in this paper, 
 we first require the notion of a \emph{cycle composition}, as defined below. 

\begin{definition}
   For a permutation $\sigma$, the \emph{cycle composition} $\text{ccomp}(\sigma)$ of $\sigma$ is the integer composition obtained by 
  taking the cycle  decomposition of $\sigma$, and by then writing each cycle as a sequence of integers in increasing order and sorting the  
  resultant cycles in increasing  order, lexicographically, and by then taking the consecutive lengths of the cycles in the resultant list.  
\end{definition}

\begin{example}
 Inputting 
\begin{verbatim}
permu = Permutations(7).list()[777];
print(permu);
print(permu.to_cycles());
print([len(t) for t in permu.to_cycles()]);
\end{verbatim}
 into {\tt SageMath}, we obtain the output
\begin{verbatim}
[2, 1, 5, 4, 6, 7, 3]
[(1, 2), (3, 5, 6, 7), (4,)]
[2, 4, 1]
\end{verbatim}
 and this illustrates that 
 $ \text{ccomp} \left(\begin{smallmatrix} 
1 & 2 & 3 & 4 & 5 & 6 & 7 \\ 
 2 & 1 & 5 & 4 & 6 & 7 & 3
\end{smallmatrix}\right) = (2, 4, 1)$. 
\end{example}

 To define the quasi-immanant matrix functions introduced in this paper,  we are to require one of the families of quasisymmetric power 
 sums  introduced by Ballantine et al.\ \cite{BallantineDaughertyHicksMasonNiese2020}.  In this direction, we begin by defining the 
 \emph{descent set} of an integer composition $\alpha$ so that  $$ \text{Set}(\alpha) = \{ \alpha_{1},  \alpha_{1} + \alpha_{2}, \ldots, 
 \alpha_{1} + \alpha_{2} + \cdots + \alpha_{\ell(\alpha) - 1} \}. $$  We also write  $ \beta \preceq \alpha $ to denote the relation such 
 that $\text{Set}(\alpha) \subseteq \text{Set}(\beta)$.  For $\beta$ and $\alpha$ such that  $ \beta \preceq \alpha$, we write 
 $$ \alpha = (\beta_1 + \cdots + \beta_{i_{1}},  \beta_{i_{1} + 1} + \cdots + \beta_{i_{1} + i_{2}}, \ldots, \beta_{i_{1} + \cdots + i_{k-1} + 
 1} + \cdots + \beta_{i_{1} + \cdots + i_{k}} ). $$ We may then define $\beta^{(j)}$ as the composition given by the  consecutive entries 
 in $\beta$ given by the terms in the $j^{\text{th}}$ entry on the right-hand side of the above formula for $\alpha \succeq \beta$. 

 We then set  $$ \ell(\beta, \alpha) = \prod_{j=1}^{\ell(\alpha)} \ell\left( \beta^{(j)} \right). $$ Similarly, writing $\text{lp}(\beta) = 
 \beta_{\ell(\beta)}$, we set  $$ \text{lp}(\beta, \alpha) = \prod_{i=1}^{\ell(\alpha)} \text{lp}\left( \beta^{(i)} \right). $$ Also, for $\lambda 
 \vdash n$, letting $m_{i}$ denote the number of  parts of $\lambda$ of size $i$, and letting the maximal such size be $k$, we write 
\begin{equation}\label{zlambda}
 z_{\lambda} = 1^{m_{1}} m_{1}! 2^{m_{2}} m_{2}! \cdots k^{m_{k}} m_{k}!, 
\end{equation}
 so that \eqref{zlambda} is equal to the size of the stabilizer of a permutation that is of cycle type $\lambda$, 
 with respect to the action of the order-$n$ symmetric group on itself given by conjugation. 
 For an integer composition $\alpha$, following the work of 
 Ballantine et al.\ \cite{BallantineDaughertyHicksMasonNiese2020}, 
 we set $z_{\alpha} = z_{\text{sort}(\alpha)}$. 

 Type 1 and 2 \emph{quasisymmetric power sums} may be defined via the inverse relations such that 
\begin{equation}\label{Psidefinition}
 M_{\beta} = \sum_{\alpha \succeq \beta} (-1)^{\ell(\beta) - \ell(\alpha)} \frac{ \text{lp}(\beta, \alpha) }{z_{\alpha}} 
 \Psi_{\alpha} 
\end{equation}
 and such that 
\begin{equation}\label{Phidefinition}
 M_{\beta} = \sum_{\alpha \succeq \beta} (-1)^{\ell(\beta) - \ell(\alpha)} \frac{ \prod_{i} \alpha_{i} }{ z_{\alpha} \ell(\beta, \alpha) } 
 \Phi_{\alpha}. 
\end{equation}
 The inverse relations in \eqref{Psidefinition} and \eqref{Phidefinition}
 can be shown to provide a refinement of the power sum 
 symmetric functions in $\textsf{Sym}$, in that 
 $$ p_{\lambda} = \sum_{\text{sort}(\alpha) = \lambda} \Psi_{\alpha} 
 = \sum_{\text{sort}(\alpha) = \lambda} \Phi_{\alpha}. $$ 

\begin{definition}\label{defineQImm}
 For an $n \times n$ matrix $A = [a_{i, j}]_{1 \leq i, j \leq n}$ and for an element $Q$ in $\textsf{QSym}$,
 we define the \emph{quasi-immanant} 
 $\text{QImm}^{Q}_{\Psi}$ so that $$ \text{QImm}_{\Psi}^{Q}(A) = \sum_{\sigma \in S_{n}} 
 \left( \text{coefficient in $n! Q$ of} \begin{cases} 
 p_{\text{ctype}(\sigma)} & \text{if $Q \in \textsf{Sym}$} \\ 
 \Psi_{\text{ccomp}(\sigma)} & \text{otherwise} 
 \end{cases} \right) \prod_{i=1}^{n} a_{i, \sigma_{i}}, $$ 
 and similarly for $\text{QImm}^{Q}_{\Phi}$ (via the replacement of $\Psi$ with $\Phi$).
\end{definition}

 We find that Definition \ref{defineQImm} 
 provides a natural generalization of both classical immanants and the immanant-like functions considered by 
 Clearman et al.\ \cite{ClearmanSheltonSkandera2011}, in view of the special cases of 
 quasi-immanants given below. 
 Unless otherwise indicated, we henceforth let $A$ be an $n \times n$ matrix as in 
 Definition \ref{defineQImm}. 

\begin{theorem}\label{theoremSchur}
 For all $\lambda \vdash n$, the relation 
 $ \text{\emph{QImm}}_{\Psi}^{s_{\lambda}}(A) = \text{\emph{QImm}}_{\Phi}^{s_{\lambda}}(A) = \text{\emph{Imm}}^{\lambda}(A)$ holds. 
\end{theorem}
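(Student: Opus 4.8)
The plan is to reduce both quasi-immanants to the classical immanant by exploiting that the Schur function $s_{\lambda}$ lies in $\textsf{Sym}$, so that Definition \ref{defineQImm} never actually invokes either the $\Psi$- or the $\Phi$-expansion. Concretely, since $s_{\lambda} \in \textsf{Sym}$, for every $\sigma \in S_{n}$ the piecewise rule in Definition \ref{defineQImm} selects its first branch, namely the coefficient of $p_{\text{ctype}(\sigma)}$ in $n! \, s_{\lambda}$, and this choice is made identically in the defining sums for $\text{QImm}_{\Psi}^{s_{\lambda}}(A)$ and $\text{QImm}_{\Phi}^{s_{\lambda}}(A)$. This yields the first equality $\text{QImm}_{\Psi}^{s_{\lambda}}(A) = \text{QImm}_{\Phi}^{s_{\lambda}}(A)$ immediately, and it reduces the theorem to identifying the common value with $\text{Imm}^{\lambda}(A)$.

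For the remaining equality, I would expand $n! \, s_{\lambda}$ directly via \eqref{Schurtopower}, writing
\[
 n! \, s_{\lambda} = \sum_{\tau \in S_{n}} \chi^{\lambda}(\text{ctype}(\tau)) \, p_{\text{ctype}(\tau)}
\]
as a sum indexed by permutations. The coefficient attached to the summand $p_{\text{ctype}(\sigma)}$ is then $\chi^{\lambda}(\text{ctype}(\sigma))$, which is precisely the value $\text{Frob}^{-1}(s_{\lambda})(\sigma)$ furnished by the Frobenius correspondence \eqref{thetamapsto} under $\chi^{\lambda} \mapsto s_{\lambda}$. Substituting this coefficient into Definition \ref{defineQImm} gives
\[
 \text{QImm}_{\Psi}^{s_{\lambda}}(A) = \sum_{\sigma \in S_{n}} \chi^{\lambda}(\text{ctype}(\sigma)) \prod_{i=1}^{n} a_{i, \sigma_{i}},
\]
which is exactly \eqref{Immdefinition}, completing the argument.

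The step requiring the most care is the correct reading of ``coefficient of $p_{\text{ctype}(\sigma)}$ in $n! \, s_{\lambda}$'' in Definition \ref{defineQImm}. Because $\chi^{\lambda}$ is a class function, its value is constant across the $n!/z_{\text{ctype}(\sigma)}$ permutations sharing the cycle type of $\sigma$, so the per-permutation coefficient in the sum-over-permutations form of \eqref{Schurtopower} is unambiguously $\chi^{\lambda}(\text{ctype}(\sigma))$; this is the reading consistent with the Frobenius-preimage viewpoint of \eqref{thetamapsto}. I would stress that this is deliberately \emph{not} the coefficient of $p_{\text{ctype}(\sigma)}$ in the expansion of $n! \, s_{\lambda}$ in the partition-indexed power sum basis $\{ p_{\mu} \}_{\mu \vdash n}$, which would instead return $\frac{n!}{z_{\text{ctype}(\sigma)}} \chi^{\lambda}(\text{ctype}(\sigma))$ and destroy the identity. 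Keeping the summation indexed by permutations rather than collapsed onto cycle types is exactly what makes the normalization come out correctly, and it is also what allows the $\textsf{Sym}$-branch to bypass the more delicate $\Psi$- and $\Phi$-expansions (where one would otherwise need to count permutations of a fixed cycle composition).
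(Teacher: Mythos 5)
Your proposal is correct and follows essentially the same route as the paper, whose entire proof is the one-line observation that the claim follows from the $s$-to-$p$ expansion in \eqref{Schurtopower}. Your additional care in distinguishing the permutation-indexed reading of ``coefficient of $p_{\text{ctype}(\sigma)}$ in $n!\,s_{\lambda}$'' (the Frobenius-preimage value $\chi^{\lambda}(\text{ctype}(\sigma))$) from the partition-indexed basis coefficient (which would carry an extra factor of $n!/z_{\text{ctype}(\sigma)}$) is a genuine subtlety that the paper's Definition \ref{defineQImm} leaves implicit, and you resolve it in the only way consistent with the theorem.
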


\begin{proof}
 This follows from the $s$-to-$p$ expansion formula in \eqref{Schurtopower}. 
\end{proof}

\begin{theorem} 
 For all $\lambda \vdash n$, the relation 
 $ \text{\emph{QImm}}_{\Psi}^{e_{\lambda}}(A) = \text{\emph{QImm}}_{\Phi}^{e_{\lambda}}(A) = \text{\emph{Imm}}^{\epsilon^{\lambda}}(A)$ holds. 
\end{theorem}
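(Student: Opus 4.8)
The plan is to mirror the one-line argument used for Theorem~\ref{theoremSchur}, now invoking the $e$-to-$p$ expansion in \eqref{etop} in place of the Schur expansion \eqref{Schurtopower}. The first observation is that $e_{\lambda} \in \textsf{Sym}$, so that in Definition~\ref{defineQImm} the branch corresponding to the case $Q \in \textsf{Sym}$ applies to both $\text{QImm}_{\Psi}^{e_{\lambda}}$ and $\text{QImm}_{\Phi}^{e_{\lambda}}$. Since the $\Psi$- and $\Phi$-versions of Definition~\ref{defineQImm} differ only in the ``otherwise'' branch, where $Q \notin \textsf{Sym}$, they coincide on $e_{\lambda}$; this yields the equality $\text{QImm}_{\Psi}^{e_{\lambda}}(A) = \text{QImm}_{\Phi}^{e_{\lambda}}(A)$ at once, reducing the theorem to the single identity $\text{QImm}_{\Psi}^{e_{\lambda}}(A) = \text{Imm}^{\epsilon^{\lambda}}(A)$.

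Next I would extract the relevant coefficient. Multiplying \eqref{etop} by $n!$ gives $n! \, e_{\lambda} = \sum_{\sigma \in S_{n}} \epsilon^{\lambda}(\sigma) \, p_{\text{ctype}(\sigma)}$, so that the coefficient of $p_{\text{ctype}(\sigma)}$ called for in Definition~\ref{defineQImm} is exactly $\epsilon^{\lambda}(\sigma)$. Substituting this scalar into the $Q \in \textsf{Sym}$ branch produces $\sum_{\sigma \in S_{n}} \epsilon^{\lambda}(\sigma) \prod_{i=1}^{n} a_{i, \sigma_{i}}$, which is precisely the elementary immanant $\text{Imm}^{\epsilon^{\lambda}}(A)$ of \eqref{Immepsilon}, completing the chain of equalities.

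The only point requiring care, and the step I would treat as the main (if modest) obstacle, is justifying that the coefficient of $p_{\text{ctype}(\sigma)}$ in $n! \, e_{\lambda}$ is unambiguously $\epsilon^{\lambda}(\sigma)$. Because several permutations share a common cycle type, one must note that $\epsilon^{\lambda}$ is a class function, so that every summand in $\sum_{\sigma} \epsilon^{\lambda}(\sigma)\, p_{\text{ctype}(\sigma)}$ indexed by a permutation of a given cycle type carries the same scalar; this makes the coefficient read off in Definition~\ref{defineQImm} well defined and equal to $\epsilon^{\lambda}(\sigma)$. This is the exact analogue of the role played by the class function $\chi^{\lambda}$ in Theorem~\ref{theoremSchur}, and with it in hand the result follows just as that theorem follows from \eqref{Schurtopower}.
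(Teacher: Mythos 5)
Your proposal is correct and follows essentially the same route as the paper, whose entire proof is the observation that the claim follows from the $e$-to-$p$ expansion in \eqref{etop}; you simply spell out the details (that $e_{\lambda} \in \textsf{Sym}$ selects the first branch of Definition~\ref{defineQImm} for both $\Psi$ and $\Phi$, and that the resulting coefficient is $\epsilon^{\lambda}(\sigma)$, matching \eqref{Immepsilon}). Your added remark that $\epsilon^{\lambda}$ must be a class function for the coefficient in the permutation-indexed expansion to be well defined is a legitimate point of care that the paper leaves implicit, but it does not constitute a different approach.
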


\begin{proof}
 This follows from the $e$-to-$p$ expansion formula in \eqref{etop}. 
\end{proof}

\begin{theorem} 
 For all $\lambda \vdash n$, the 
 relation $ \text{\emph{QImm}}_{\Psi}^{m_{\lambda}}(A) = 
 \text{\emph{QImm}}_{\Phi}^{m_{\lambda}}(A) = \text{\emph{Imm}}^{\phi^{\lambda}}(A)$ holds. 
\end{theorem}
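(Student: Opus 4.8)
The plan is to proceed exactly as in the proofs of the two preceding theorems, replacing the roles of \eqref{Schurtopower} and \eqref{etop} by the $m$-to-$p$ expansion in \eqref{mtop}. The argument has no substantive computation; its content is in correctly unwinding Definition \ref{defineQImm} for an input lying in $\textsf{Sym}$.

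First I would record that $m_{\lambda} \in \textsf{Sym}$, as noted in Section \ref{sectionPrelim}. Consequently, in evaluating either $\text{QImm}_{\Psi}^{m_{\lambda}}(A)$ or $\text{QImm}_{\Phi}^{m_{\lambda}}(A)$ through Definition \ref{defineQImm}, the first branch of the piecewise coefficient is selected, namely the coefficient of $p_{\text{ctype}(\sigma)}$ in $n!\, m_{\lambda}$. Since this branch makes no reference to whether we are using the Type~1 or Type~2 quasisymmetric power sums, the two quasi-immanants agree term by term, which disposes of the equality $\text{QImm}_{\Psi}^{m_{\lambda}}(A) = \text{QImm}_{\Phi}^{m_{\lambda}}(A)$ without any further work. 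I would then extract the relevant coefficient: the expansion \eqref{mtop} writes $n!\, m_{\lambda} = \sum_{\sigma \in S_{n}} \phi^{\lambda}(\sigma)\, p_{\text{ctype}(\sigma)}$, exhibiting $\phi^{\lambda}$ as the preimage of $m_{\lambda}$ under the Frobenius map \eqref{thetamapsto}. Reading off the coefficient attached to a fixed permutation $\sigma$ in this per-permutation expansion gives precisely $\phi^{\lambda}(\sigma)$, and substituting into Definition \ref{defineQImm} yields $\sum_{\sigma \in S_{n}} \phi^{\lambda}(\sigma) \prod_{i=1}^{n} a_{i, \sigma_{i}}$, which is the monomial immanant $\text{Imm}^{\phi^{\lambda}}(A)$ of \eqref{Immphi}.

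The only genuine subtlety, and the point I would take care to address explicitly, is the meaning of the phrase ``coefficient in $n!\, Q$ of $p_{\text{ctype}(\sigma)}$'' when several permutations share a cycle type. The intended reading is the per-permutation coefficient furnished by the inverse Frobenius expansion, that is, the value at $\sigma$ of the preimage $\phi^{\lambda}$ of $m_{\lambda}$ under $\text{Frob}$, rather than the coefficient of $p_{\mu}$ in the partition-indexed power-sum basis, which would instead carry an extra factor of $n!/z_{\text{ctype}(\sigma)}$ arising from the $n!/z_{\mu}$ permutations of a given cycle type. Confirming that \eqref{mtop} is exactly of the form \eqref{thetamapsto} with $\theta = \phi^{\lambda}$, and that $\phi^{\lambda}$ is a class function so that $\phi^{\lambda}(\sigma)$ is well defined by $\text{ctype}(\sigma)$, is what makes this identification legitimate and turns the one-line argument into a rigorous one.
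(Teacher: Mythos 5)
Your proof is correct and follows essentially the same route as the paper, whose entire argument is the one-line observation that the result follows from the $m$-to-$p$ expansion in \eqref{mtop}; you simply unwind Definition \ref{defineQImm} explicitly, noting that $m_{\lambda} \in \textsf{Sym}$ forces the $p_{\text{ctype}(\sigma)}$ branch (making the $\Psi$/$\Phi$ equality automatic) and that the resulting coefficient is $\phi^{\lambda}(\sigma)$ by \eqref{mtop}. Your added remark resolving the per-permutation versus power-sum-basis reading of ``coefficient in $n!\,Q$ of $p_{\text{ctype}(\sigma)}$'' (which differ by $n!/z_{\text{ctype}(\sigma)}$) is a genuine clarification the paper leaves implicit, and your resolution is the one under which this theorem and Theorem \ref{theoremSchur} hold.
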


\begin{proof}
 This follows from the $m$-to-$p$ expansion formula in \eqref{mtop}. 
\end{proof}

 For the relations for $\text{QImm}_{\ast}^{Q}$ covered above for $Q \in \textsf{Sym}$, 
 this leads us to consider the evaluation of $\text{QImm}_{\ast}^{Q}$ for non-symmetric and quasisymmetric functions $Q$. 
 Since $\chi^{\lambda}$ is the preimage of $s_{\lambda}$ with respect to the Frobenius morphism, 
 this leads us to let $Q$ be a quasisymetric analogue of $s_{\lambda}$, 
 in the hope that analogues of $\text{Frob}$ defined with the use of quasisymmetric functions 
 could be used to develop a better understanding of the matrix function in 
 \eqref{Immdefinition} and its relationships
 with \eqref{Immepsilon} and \eqref{Immphi}. 

\subsection{Quasi-Schur second immanants}
 Since the $\lambda = (1^{n})$ case of Theorem \ref{theoremSchur} is equivalent to 
\begin{equation}\label{208284818083803187PM8A}
 \text{QImm}_{\Psi}^{s_{(1^{n})}}(A) = \text{QImm}_{\Phi}^{s_{(1^{n})}}(A) = \text{det}(A). 
\end{equation}
 This leads us to consider quasisymmetric analogues of the relation in \eqref{208284818083803187PM8A}. 
 This, in turn, leads us to make use of quasisymmetric analogues of the Schur basis. 
 What are referred to as the \emph{canonical Schur-like bases of} 
 $\textsf{NSym}$ \cite{Campbell2016,MasonSearles2021,Searles2020Lothar,Searles2020Proc} 
 are the dual quasi-Schur basis $\{ \mathcal{S}_{\alpha}^{\ast} \}_{\alpha \in \mathcal{C}}$ 
 \cite{BessenrodtLuotovanWilligenburg2011}, 
 the immaculate basis $\{ \mathfrak{S}_{\alpha} \}_{\alpha \in \mathcal{C}}$ 
 \cite{BergBergeronSaliolaSerranoZabrocki2014}, 
 and the shin basis $\{ \shin_{\alpha} \}_{\alpha \in \mathcal{C}}$ \cite{CampbellFeldmanLightShuldinerXu2014}, 
 with each of these bases of $\textsf{NSym}$ providing a corresponding 
 and dual basis 
 of $\textsf{QSym}$, according to duality relations as in \eqref{pairhM}. 
 Out of these dual bases, for reasons described below, we will 
 mainly be concerned with with the quasi-Schur basis $\{ \mathcal{S}_{\alpha} \}_{\alpha \in \mathcal{C}}$
 of $\textsf{QSym}$. 

  The quasi-Schur function indexed by a composition $\alpha$  may be defined so that  
\begin{equation*}
 \mathcal{S}_{\alpha} = \sum_{\beta} K_{\alpha, \beta} M_{\beta} 
\end{equation*}
 holds, where $K_{\alpha, \beta}$ denotes the number of tableaux $T$ of shape 
 $\alpha$ and weight $\beta$ of the following form \cite{HaglundLuotoMasonvanWilligenburg2011}. 
 The cells of $T$ are filled with positive integers in 
 such a way so that 
 the rows of $T$ are weakly decreasing (read from left to right), 
 the first column is strictly increasing (when read from top to bottom), 
 and, letting $m$ be the largest part of $\alpha$, 
 by adding $0$-labeled cells to $T$ to form an $\ell(\alpha) \times m$
 rectangular tableau $\hat{T}$, then, for 
 $1 \leq i < j \leq \ell(\alpha)$ and $2 \leq k \leq m$, we have that 
 $$ \left( \hat{T}(j, k) \neq 0 \ \text{and} \ 
 \hat{T}(j, k) \geq \hat{T}(i, k) \right) \Longrightarrow \hat{T}(j, k) > \hat{T}(i, k-1). $$ 
 As for definitions for $\{ \mathfrak{S}_{\alpha}^{\ast} \}_{\alpha \in \mathcal{C}}$
 and $\{ \shin_{\alpha}^{\ast} \}_{\alpha \in \mathcal{C}}$, we refer to the 
 corresponding references that introduced the immaculate and shin bases 
 \cite{BergBergeronSaliolaSerranoZabrocki2014,CampbellFeldmanLightShuldinerXu2014}. 
 Little is known about the dual of the shin basis, and no explicit formulas are known 
 for expanding this dual basis into the usual bases of $\textsf{QSym}$, 
 so we omit consideration of quasisymmetric dual shin functions. 

 One might think that the determinant identities in \eqref{208284818083803187PM8A} 
 would suggest that expressions as in 
\begin{equation}\label{20250717076547470AM1A}
 \text{QImm}_{\ast}^{\mathfrak{S}^{\ast}_{(1^{n})}} 
 \ \ \ \text{and} \ \ \ \text{QImm}_{\ast}^{\mathcal{S}_{(1^{n})}} 
\end{equation} 
 would provide quasisymmetric analogues of 
 the determinant function according to 
 Definition \ref{defineQImm}, but 
 one may verify that $\mathfrak{S}_{\left( 1^{n} \right)}^{\ast} = \mathcal{S}_{\left( 1^{n} \right)}^{\ast} 
 = s_{\left( 1^{n} \right)}$, 
 i.e., so that all of the matrix functions in \eqref{20250717076547470AM1A} 
 reduce to the determinant. 
 This leads us to consider what is regarded as the ``next'' immanant 
 after the determinant, i.e., what is referred to as the second immanant 
 \cite{Grone1985,GroneMerris1984,GroneMerris1987Fischer,GroneMerris1987Hadamard,Merris1987Oppenheim,Merris1986,WuYuFengGao2024,WuYuGao2023}. 

 The \emph{second immanant} refers to the immanant function given by 
 setting $\lambda = (2, 1^{n-2})$ in the family of matrix functions in \eqref{Immdefinition}. 
 It can be shown that $\mathfrak{S}_{(2, 1^{n})}^{\ast} \in \textsf{Sym}$, 
 which leads us to show, as below, that $\mathcal{S}_{(2, 1^{n})} \not\in \textsf{{Sym}}$. 
 The following result gives us that 
 $\text{QImm}^{\mathcal{S}_{(2, 1^{n})}}_{\Psi}$ and $\text{QImm}^{\mathcal{S}_{(2, 1^{n})}}_{\Phi}$ do 
 not reduce to immanants or the generalization of immanants 
 relying on symmetric functions in the sense of the work of Skandera \cite{Skandera2021}. 

\begin{lemma}\label{notinSym}
 For positive integers $n$, we have that $\mathcal{S}_{(2, 1^{n})} \not\in \textsf{\emph{Sym}}$. 
\end{lemma}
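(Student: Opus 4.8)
The plan is to reduce the claim to a single coefficient comparison using the standard description of $\textsf{Sym}$ as a subalgebra of $\textsf{QSym}$. Since $m_{\lambda} = \sum_{\text{sort}(\alpha) = \lambda} M_{\alpha}$ by \eqref{membed} and since $\{ m_{\lambda} \}_{\lambda \in \mathcal{P}}$ is a basis of $\textsf{Sym}$, a quasisymmetric function $\sum_{\beta} c_{\beta} M_{\beta}$ lies in $\textsf{Sym}$ if and only if the coefficient $c_{\beta}$ depends only on $\text{sort}(\beta)$. So it suffices to exhibit two compositions $\beta$ and $\beta'$ with $\text{sort}(\beta) = \text{sort}(\beta')$ but $K_{(2, 1^{n}), \beta} \neq K_{(2, 1^{n}), \beta'}$, where $K$ is the tableau count appearing in the $M$-expansion of $\mathcal{S}_{\alpha}$.

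First I would record the geometry of the shape $\alpha = (2, 1^{n})$: its diagram is a first column of $n+1$ cells together with the single extra cell $(1,2)$. I would also note that, by quasisymmetry, the coefficient $K_{\alpha, \beta}$ may be computed by counting the fillings realizing the content pattern $\beta$ using the values $1, 2, \ldots, \ell(\beta)$, with value $j$ occurring exactly $\beta_{j}$ times. I would then restrict attention to contents $\beta$ that are rearrangements of the partition $(2, 1^{n})$, i.e.\ those having one part equal to $2$ and $n$ parts equal to $1$. For any such content the multiset of entries consists of the $n+1$ distinct values $1, 2, \ldots, n+1$ with exactly one of them doubled; since the first column has $n+1$ cells and is strictly increasing, it must use all $n+1$ distinct values in increasing order, forcing the first column to read $(1, 2, \ldots, n+1)$ from top to bottom, and in particular $T(1,1) = 1$. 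The remaining cell $(1,2)$ must then carry the second copy of the doubled value.

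The argument then splits into two cases. If the doubled value is $1$, i.e.\ $\beta = (2, 1^{n})$, then $T(1,2) = 1 \leq T(1,1)$ satisfies the weakly decreasing row condition; moreover the triple condition is vacuous here, since with $m = 2$ the augmented second column of $\hat{T}$ is $0$ in every row below the first, so the hypothesis $\hat{T}(j,k) \neq 0$ never holds for $j > 1$. This yields exactly one valid tableau, giving $K_{(2, 1^{n}), (2, 1^{n})} = 1$. If instead the doubled value exceeds $1$ — as happens for $\beta' = (1^{n}, 2)$ — then $T(1,2) \geq 2 > 1 = T(1,1)$ violates the weakly decreasing row condition, so $K_{(2, 1^{n}), (1^{n}, 2)} = 0$. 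Since $\text{sort}(2, 1^{n}) = \text{sort}(1^{n}, 2)$ while the coefficients of $M_{(2, 1^{n})}$ and $M_{(1^{n}, 2)}$ in $\mathcal{S}_{(2, 1^{n})}$ are $1$ and $0$ respectively, the characterization above forces $\mathcal{S}_{(2, 1^{n})} \notin \textsf{Sym}$, and the hypothesis $n \geq 1$ is exactly what makes $2 > 1$ produce the violation.

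The step I expect to require the most care is the forcing argument pinning down the first column, together with the correct handling of the content/weight convention so that $K_{\alpha, \beta}$ is interpreted as intended; the triple condition, by contrast, turns out to be vacuous for this shape, so the genuine combinatorial content is only the tension between the strictly increasing first column and the weakly decreasing first row.
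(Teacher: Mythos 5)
Your proof is correct, and it takes a genuinely different route from the paper's. The paper's own proof cites the unitriangularity, with respect to the order $\blacktriangleright$, of the $\mathcal{S}$-to-$M$ transition matrix (Proposition 6.7 of Haglund et al.\ \cite{HaglundLuotoMasonvanWilligenburg2011}) to pin down the $M$-expansion of $\mathcal{S}_{(2,1^{n})}$, and then applies \eqref{membed}; you instead compute the two relevant coefficients directly from the composition-tableau definition, showing $K_{(2,1^{n}),(2,1^{n})}=1$ while $K_{(2,1^{n}),(1^{n},2)}=0$, and then invoke the same \eqref{membed}-based characterization of $\textsf{Sym}$ inside $\textsf{QSym}$ (coefficients constant on sort-classes). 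Your forcing argument is sound: for any weight that rearranges $(2,1^{n})$, the strictly increasing first column of length $n+1$ must read $1,2,\ldots,n+1$, the cell $(1,2)$ receives the duplicated value, and the weakly decreasing row condition admits a filling exactly when that duplicated value is $1$; the triple condition is indeed vacuous since the augmented second column of $\hat{T}$ vanishes below the first row. Notably, your computation buys something the paper's citation does not: unitriangularity per se only says the expansion is $M_{(2,1^{n})}$ (with coefficient $1$) plus terms $M_{\beta}$ with $(2,1^{n}) \blacktriangleright \beta$, and this still allows the intermediate rearrangements $(1^{k},2,1^{n-k})$ for $k \geq 1$, whose coefficients could a priori all equal $1$ and thus be consistent with membership in $\textsf{Sym}$; the paper's assertion that only $M_{(1^{n+2})}$ can accompany $M_{(2,1^{n})}$ requires exactly the tableau count you carry out, and that stronger two-term expansion is also what the proof of Theorem~\ref{maintheorem} later relies on. The trade-off is brevity: the paper outsources the combinatorics to the literature, while your argument is self-contained and, if anything, more complete.
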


\begin{proof}
 For compositions $\alpha, \beta \vDash n$, we write 
 $\alpha \blacktriangleright \beta$ 
 if $\text{sort}(\alpha) $ is lexicographically strictly greater than 
 $\text{sort}(\beta)$, or if $\text{sort}(\alpha) = \text{sort}(\beta)$ 
 and $\alpha$ is lexicographically strictly greater than $\beta$. 
 For the matrix with rows and columns indexed by compositions $\alpha \vDash n$ 
 and ordered by $\blacktriangleright$ 
 so that the $(\alpha, \beta)$-entry is the coefficient of $M_{\beta}$ in $\mathcal{S}_{\alpha}$, 
 this transition matrix is upper unitriangular, as shown by 
 Haglund et al.\ \cite[Proposition 6.7]{HaglundLuotoMasonvanWilligenburg2011}. 
 This gives us that $\mathcal{S}_{(2, 1^{n})}$ 
 equals the sum of $M_{(2, 1^{n})}$ and a scalar multiple of 
 $M_{(1^{n})}$, so that \eqref{membed} 
 gives us the desired result. 
\end{proof}

 This motivates the problem of evaluating $\text{QImm}^{\mathcal{S}_{(2, 1^{n})}}_{\ast}$. 
 Moreover, if we consider the special case of \eqref{Immdefinition} 
 for second immanants, 
 this leads us to the combinatorial interpretation for the resultant characters
 and to consider what would be appropriate as an analogue of this, with the use of quasisymmetric Schur functions. 
 For the $\lambda = (2, 1^{n-2})$ case of 
 \eqref{Immdefinition}, the character in 
 \eqref{Immdefinition} is such that $\chi_2(\sigma) = \text{sign}(\sigma)(f(\sigma) - 1)$, 
 writing $f(\sigma)$ in place of the number of gixed points of an order-$n$ permutation, with 
\begin{equation}\label{d2notation}
 d_{2}(A) = \sum_{\sigma \in S_{n}} \chi_{2}(\sigma) \prod_{i = 1}^{n} a_{i, \sigma(i)} 
\end{equation}
 according to the usual notation for second immanants, 
 writing $A = [a_{i, j}]_{n \times n}$. 
 As suggested above, the combinatorial interpretation of the character coefficients in 
 \eqref{d2notation} leads us to consider 
 how similar combinatorial properties could be determined for our quasisymmetric analogues of second immanants
 given by matrix functions of the form 
 $\text{QImm}^{\mathcal{S}_{(2, 1^{n-2})}}_{\ast}$. 
 We proceed to consider the 
 case for $\text{QImm}^{\mathcal{S}_{(2, 1^{n-2})}}_{\Psi}$, 
 by using the combinatorial formula in \eqref{Psidefinition}. 

\begin{theorem}\label{maintheorem}
 For a natural number $n \geq 3$ and for an $n \times n$ matrix $A = [a_{i, j}]_{1 \leq i, j \leq n}$, the quasi-immanant 
 $\text{\emph{QImm}}^{\mathcal{S}_{(2, 1^{n-2})}}_{\Psi}(A)$ 
 satisfies $$ \text{\emph{QImm}}^{\mathcal{S}_{(2, 1^{n-2})}}_{\Psi}(A) = 
 \sum_{\sigma \in S_{n}} 
 c_{\sigma} \prod_{i=1}^{n} a_{i, \sigma_{i}}, $$ 
 where the coefficient $ c_{\sigma} $ satisfies the following: 
\begin{enumerate}

\item If $\text{\emph{ccomp}}(\sigma)_{1} = 1$, then 
 $ c_{\sigma} $ equals $(-1)^{n - \ell(\text{\emph{ccomp}}(\sigma))}$ times the number of $n$-permutations
 of same cycle type as $\sigma$; 

\item If $\text{\emph{ccomp}}(\sigma)_{1} = 2$, then 
 $ c_{\sigma} $ equals $(-1)^{n - 1 - \ell(\text{\emph{ccomp}}(\sigma))}$ times the number of $n$-permutations
 of the same cycle type as $\sigma$; and 

\item If $\text{\emph{ccomp}}(\sigma)_{1} > 2$, then 
 $ c_{\sigma} = 0$. 

\end{enumerate}

\end{theorem}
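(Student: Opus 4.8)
The plan is to reduce everything to an explicit monomial expansion of $\mathcal{S}_{(2,1^{n-2})}$ followed by a single application of the inverse relation \eqref{Psidefinition}. First I would sharpen Lemma \ref{notinSym}: its proof already shows, via the upper unitriangularity of the $M$-to-$\mathcal{S}$ transition matrix, that $\mathcal{S}_{(2,1^{n-2})} = M_{(2,1^{n-2})} + c\, M_{(1^n)}$ for a scalar $c$, since $(1^n)$ is the only partition of $n$ lying strictly below $(2,1^{n-2})$ and no composition with sort $(2,1^{n-2})$ other than $(2,1^{n-2})$ itself contributes. To pin down $c = K_{(2,1^{n-2}),(1^n)}$, I would count the composition tableaux of shape $(2,1^{n-2})$ with content $(1^n)$: the strictly increasing first column together with the requirement $\hat{T}(1,1) \ge \hat{T}(1,2)$ forces the single repetition-free filling in which the entry in the second cell of the top row is the global minimum, giving exactly one tableau, so $c = 1$ and
$$ \mathcal{S}_{(2,1^{n-2})} = M_{(2,1^{n-2})} + M_{(1^n)}. $$

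Next, writing $\gamma = \text{ccomp}(\sigma)$, I would read off $c_{\sigma}$ as $n!$ times the coefficient of $\Psi_{\gamma}$ on the right-hand side, using \eqref{Psidefinition} on each monomial. The key combinatorial inputs are the coarsening index sets: since $\text{Set}((1^n)) = \{1,\dots,n-1\}$, every $\gamma \vDash n$ satisfies $\gamma \succeq (1^n)$, whereas $\text{Set}((2,1^{n-2})) = \{2,\dots,n-1\}$ forces $\gamma \succeq (2,1^{n-2})$ to hold exactly when $1 \notin \text{Set}(\gamma)$, i.e.\ when $\gamma_1 \ge 2$. I would then evaluate the $\text{lp}(\beta,\gamma)$ factors through the block decomposition $\beta = \beta^{(1)} \cdots \beta^{(\ell(\gamma))}$: for $\beta = (1^n)$ every block is a string of $1$'s, so $\text{lp}((1^n),\gamma) = 1$; for $\beta = (2,1^{n-2})$ only the first block contains the initial part $2$, whence its last part equals $2$ if $\gamma_1 = 2$ and equals $1$ if $\gamma_1 > 2$, giving $\text{lp}((2,1^{n-2}),\gamma) \in \{2,1\}$ accordingly.

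Assembling these, and recalling that $n!/z_{\gamma}$ equals the number of $n$-permutations of cycle type $\text{sort}(\gamma) = \text{ctype}(\sigma)$ by \eqref{zlambda}, the coefficient becomes a sum of at most two signed terms $(-1)^{n-\ell(\gamma)}\tfrac{1}{z_{\gamma}}$ and $(-1)^{n-1-\ell(\gamma)}\tfrac{\text{lp}}{z_{\gamma}}$. The three cases then fall out from elementary sign bookkeeping: when $\gamma_1 = 1$ only the $M_{(1^n)}$ term survives and yields item (1); when $\gamma_1 = 2$ the two terms combine as $(-1)^{n-\ell(\gamma)} + 2(-1)^{n-1-\ell(\gamma)} = (-1)^{n-1-\ell(\gamma)}$, giving item (2); and when $\gamma_1 \ge 3$ both $\text{lp}$ factors equal $1$, so the contributions $(-1)^{n-\ell(\gamma)} + (-1)^{n-1-\ell(\gamma)}$ cancel and $c_{\sigma} = 0$, giving item (3).

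I expect the main obstacle to be Step 1 — certifying that no monomials beyond $M_{(2,1^{n-2})}$ and $M_{(1^n)}$ occur and that the latter's coefficient is exactly $1$ — since this is the only place genuine tableau combinatorics enters; once the two-term expansion is in hand, the remainder is a mechanical, if sign-sensitive, application of \eqref{Psidefinition}. A secondary point requiring care is the $\text{lp}$ computation, where one must track that the distinguished part $2$ of $(2,1^{n-2})$ is absorbed into the first block precisely when $\gamma_1 \ge 2$, since it is this dependence on $\gamma_1$ that produces the trichotomy.
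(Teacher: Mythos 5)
Your proposal is correct and follows essentially the same route as the paper's proof: expand $\mathcal{S}_{(2,1^{n-2})}$ as $M_{(2,1^{n-2})} + M_{(1^{n})}$, apply the inverse relation \eqref{Psidefinition} to each monomial term (with the same refinement and $\text{lp}$ analysis), and finish with the identical sign bookkeeping and the interpretation of $n!/z_{\alpha}$ as a count of permutations of cycle type $\text{sort}(\alpha)$. The only difference is cosmetic: you justify the coefficient $K_{(2,1^{n-2}),(1^{n})}=1$ by an explicit tableau count, whereas the paper simply invokes the $\mathcal{S}$-to-$M$ expansion together with Lemma \ref{notinSym}; your added verification is a welcome, if minor, strengthening of that step.
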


\begin{proof}
 From \eqref{Psidefinition}, we find that 
\begin{equation}\label{factorialhook}
 n! M_{(2, 1^{n-2})} = \sum_{ {\substack{ \alpha \vDash n \\
 \alpha_{1} \geq 2 }} } (-1)^{ n - 1 - \ell(\alpha)} 
 \frac{n!}{z_{\alpha}} \text{lp}((2, 1^{n-2}), \alpha) \Psi_{\alpha} 
\end{equation}
 and that 
\begin{equation}\label{factorialstraight}
 n! M_{(1^n)} = \sum_{\alpha \vDash n } (-1)^{n - \ell(\alpha)} 
 \frac{n!}{z_{\alpha}} \text{lp}((1^n), \alpha) \Psi_{\alpha}. 
\end{equation}
 We see that \eqref{factorialstraight} reduces so that 
\begin{equation}\label{straightreduce}
 n! M_{(1^n)} = \sum_{\alpha \vDash n } (-1)^{n - \ell(\alpha)} 
 \frac{n!}{z_{\alpha}} \Psi_{\alpha} 
\end{equation}
 since, by taking $\alpha \succeq (1^{n})$, 
 by forming a given entry of $\alpha$ with consecutive parts of $(1^{n})$, 
 the last part will always be $1$. 
 Similarly, we may rewrite \eqref{factorialhook} 
 so that 
\begin{equation}\label{207727570712573427PM1A}
 n! M_{(2, 1^{n-2})} = 2 \sum_{ {\substack{ \alpha \vDash n \\ 
 \alpha_{1} = 2 }} } (-1)^{ n - 1 - \ell(\alpha)} 
 \frac{n!}{z_{\alpha}} \Psi_{\alpha} + \sum_{ {\substack{ \alpha \vDash n \\ 
 \alpha_{1} > 2 }} } (-1)^{ n - 1 - \ell(\alpha)} 
 \frac{n!}{z_{\alpha}} \Psi_{\alpha}. 
\end{equation}
 The $\mathcal{S}$-to-$M$ expansion formula gives us that $n! \mathcal{S}_{(2, 1^{n-2})} = n! M_{(2, 1^{n-2})} + n! M_{(1^{n})}$, and 
 Lemma \ref{notinSym} gives us that the coefficient of $\prod_{i=1}^{n} a_{i, \sigma_{i}}$ in the quasi-immanant 
 $\text{QImm}_{\psi}^{\mathcal{S}_{(2, 1^{n-2})}}(A)$ is equal to the coefficient in $n! \mathcal{S}_{(2, 1^{n-2})}$ 
 of $\psi_{\text{ccomp}(\sigma)}$. 
 By simplifying the sum of \eqref{straightreduce} and 
 \eqref{207727570712573427PM1A}, the desired result follows from 
 the property such that 
 $\frac{n!}{z_{\alpha}}$ equals 
 the number of permutations of order $n$ and of cycle type $\text{sort}(\alpha)$. 
\end{proof}

\begin{example}\label{exinequivalent}
 We find that the analogue of second immanants given in 
 Theorem \ref{maintheorem} is not equivalent to second immanants. 
 For example, consider the $3 \times 3$ case, with 
\begin{multline*}
 \text{QImm}_{\Psi}^{\mathcal{S}_{(2, 1)}} \left(
\begin{array}{ccc}
 a_{1, 1} & a_{1, 2} & a_{1, 3} \\
 a_{2, 1} & a_{2, 2} & a_{2, 3} \\
 a_{3, 1} & a_{3, 2} & a_{3, 3} \\
\end{array}
\right) = \\ 
 a_{1, 1} a_{2, 2} a_{3, 3} - 3 a_{1, 1} a_{2, 3} a_{3, 2}+3 a_{1, 2} a_{2, 1} a_{3, 3}+3 a_{1, 3} a_{2, 2} a_{3, 1}. 
\end{multline*}
 Observe that 
 $ \text{ccomp} \left(\begin{smallmatrix} 
1 & 2 & 3 \\ 
 1 & 3 & 2 
\end{smallmatrix}\right) = (1, 2)$, and that 
 the coefficient of $ a_{1, 1} a_{2, 3} a_{3, 2}$ is equal to 
 $(-1)^{3 - 2}$ times the number of 
 $3$-permutations of cycle type $(2, 1)$, i.e., 
 the number of permutations among 
 $ \text{ccomp} \left(\begin{smallmatrix} 
 1 & 2 & 3 \\ 
 1 & 3 & 2 
 \end{smallmatrix}\right)$ 
 and 
 $ \text{ccomp} \left(\begin{smallmatrix} 
 1 & 2 & 3 \\ 
 3 & 2 & 1 
 \end{smallmatrix}\right)$ 
 and 
 $ \text{ccomp} \left(\begin{smallmatrix} 
 1 & 2 & 3 \\ 
 2 & 1 & 3 
 \end{smallmatrix}\right)$. 
 For the second immanant of a $3\times 3$ matrix, we obtain 
 $$ d_{2} \left(
\begin{array}{ccc}
 a_{1, 1} & a_{1, 2} & a_{1, 3} \\
 a_{2, 1} & a_{2, 2} & a_{2, 3} \\
 a_{3, 1} & a_{3, 2} & a_{3, 3} \\
\end{array}
\right) 
 = 2 a_{1, 1} a_{2, 2} a_{3, 3}-a_{1, 2} a_{2, 3} a_{3, 1}-a_{1, 3} a_{2, 1} a_{3, 2}. $$
\end{example}

 The inequivalence illustrated in Example \ref{exinequivalent} 
 leads us to consider how the combinatorial rule in Theorem \ref{maintheorem}
 could be used to obtain identities for quasi-immanants of infinite families of matrices
 and how such identities relate to second immanants. 

\begin{example}
    Immanants of Toeplitz matrices are often studied in relation    to recursive properties of such immanants. This leads us to     consider   
  quasi-immanants of Toeplitz matrices, in relation to second immanants.     For example, we find that quasi-immanants of the form    
   $ \text{QImm}_{\Psi}^{\mathcal{S}_{(2, 1)}} $ provide a natural companion to     the second immanant relation such that   
$$ d_{2} \left(
\begin{array}{ccccccc}
 0 & 1 & 0 & \cdots & 0 & 0 & 0 \\
 1 & 0 & 1 & \ddots & 0 & 0 & 0 \\
 0 & 1 & 0 & \ddots & 0 & 0 & 0 \\
 \vdots & \ddots& \ddots & \ddots & \ddots & \ddots & \vdots \\
 0 & 0 & 0 & \ddots & 0 & 1 & 0 \\
 0 & 0 & 0 & \ddots & 1 & 0 & 1 \\
 0 & 0 & 0 & \cdots & 0 & 1 & 0 \\
\end{array}
\right)_{n \times n} = \begin{cases} 
 (-1)^{\frac{n}{2}+1} & \text{if $n$ is even,} \\ 
 0 & \text{otherwise,}
 \end{cases} $$
 with 
\begin{multline*}
 \text{QImm}_{\Psi}^{\mathcal{S}_{(2, 1^{n-2})}} \left(
\begin{array}{ccccccc}
 0 & 1 & 0 & \cdots & 0 & 0 & 0 \\
 1 & 0 & 1 & \ddots & 0 & 0 & 0 \\
 0 & 1 & 0 & \ddots & 0 & 0 & 0 \\
 \vdots & \ddots& \ddots & \ddots & \ddots & \ddots & \vdots \\
 0 & 0 & 0 & \ddots & 0 & 1 & 0 \\
 0 & 0 & 0 & \ddots & 1 & 0 & 1 \\
 0 & 0 & 0 & \cdots & 0 & 1 & 0 \\
\end{array}
\right)_{n \times n} = \\ 
 \begin{cases} 
 -\left(-\frac{1}{2} \right)^{\frac{n}{2}} \frac{n!}{ \left( \frac{n}{2} \right)! } & 
 \text{if $n$ is even,} \\ 
 0 & \text{otherwise.}
 \end{cases} 
\end{multline*}
\end{example}

  With regard to the combinatorial rule in   Theorem \ref{maintheorem},   similar results can be obtained for  
   $ \text{QImm}_{\Phi}^{\mathcal{S}_{(2, 1^{n-2})}} $  and for quasi-immanants more generally, and we encourage explorations of this.  

\subsection*{Acknowledgements}
 The author is thankful to acknowledge support from a Killam Postdoctoral Fellowship from the Killam Trusts
 and thanks Karl Dilcher and Lin Jiu for a useful discussion.

 \

{\textsc{John M. Campbell}} 

\vspace{0.1in}

 Department of Mathematics and Statistics

 Dalhousie University

 Halifax, NS, B3H 4R2, Canada

\vspace{0.1in}

 {\tt jh241966@dal.ca}

\end{document}